\documentclass[preprint,12pt]{elsarticle}




\usepackage{amssymb}
\usepackage{amsthm}





\newcommand{\sysn}{\left\{\begin{array}{rcl}}
\newcommand{\sysk}{\end{array}\right.}

\newtheorem{theorem}{Theorem}[section]
\newtheorem{lemma}[theorem]{Lemma}

\theoremstyle{example}

\newtheorem{proposition}[theorem]{Proposition}
\theoremstyle{definition}
\newtheorem{definition}[theorem]{Definition}


\journal{...}

\begin{document}

\begin{frontmatter}



\title{On selective sequential separability of function spaces with the compact-open topology}


\author{Alexander V. Osipov}


\ead{OAB@list.ru}


\address{Krasovskii Institute of Mathematics and Mechanics, Ural Federal
 University,

 Ural State University of Economics, 620219, Yekaterinburg, Russia}

\begin{abstract}
For a Tychonoff space $X$, we denote by $C_k(X)$ the space of all
real-valued continuous functions on $X$ with the compact-open
topology. A subset $A\subset X$ is said to be sequentially dense
in $X$ if every point of $X$ is the limit of a convergent sequence
in $A$.

A space $C_k(X)$ is selective sequentially separable (in
Scheepers' terminology: $C_k(X)$ satisfies
$S_{fin}(\mathcal{S},\mathcal{S})$) if whenever $(S_n : n\in
\mathbb{N})$ is a sequence of sequentially dense subsets of
$C_k(X)$, one can pick finite $F_n\subset S_n$ ($n\in \mathbb{N}$)
such that $\bigcup \{F_n: n\in \mathbb{N}\}$ is sequentially dense
in $C_k(X)$.

In this paper, we have gave characterization for $C_k(X)$ to
satisfy $S_{fin}(\mathcal{S},\mathcal{S})$.

\end{abstract}

\begin{keyword}
 compact-open topology \sep function space \sep selective sequentially
 separable \sep $S_{1}(\mathcal{S},\mathcal{S})$ \sep sequentially
 dense \sep property $\alpha_2$ \sep property $\alpha_4$


\MSC[2010]   54C25 \sep 54C35  \sep 54C40 \sep 54D20

\end{keyword}

\end{frontmatter}



\section{Introduction}
\label{}
 If $X$ is a space and $A\subseteq X$, then the sequential closure of $A$,
 denoted by $[A]_{seq}$, is the set of all limits of sequences
 from $A$. A set $D\subseteq X$ is said to be sequentially dense
 if $X=[D]_{seq}$. A space $X$ is called sequentially separable if
 it has a countable sequentially dense set.

Let $X$ be a topological space, and $x\in X$. A subset $A$ of $X$
{\it converges} to $x$, $x=\lim A$, if $A$ is infinite, $x\notin
A$, and for each neighborhood $U$ of $x$, $A\setminus U$ is
finite. Consider the following collection:

$\bullet$ $\Omega_x=\{A\subseteq X : x\in \overline{A}\setminus
A\}$;

$\bullet$ $\Gamma_x=\{A\subseteq X : x=\lim A\}$.

Note that if $A\in \Gamma_x$, then there exists $\{a_n\}\subset A$
converging to $x$. So, simply $\Gamma_x$ may be the set of
non-trivial convergent sequences to $x$.

Many topological properties are defined or characterized in terms
 of the following classical selection principles.
 Let $\mathcal{A}$ and $\mathcal{B}$ be sets consisting of
families of subsets of an infinite set $X$. Then:

$S_{1}(\mathcal{A},\mathcal{B})$ is the selection hypothesis: for
each sequence $(A_{n}: n\in \mathbb{N})$ of elements of
$\mathcal{A}$ there is a sequence $\{b_{n}\}_{n\in \mathbb{N}}$
such that for each $n$, $b_{n}\in A_{n}$, and $\{b_{n}:
n\in\mathbb{N} \}$ is an element of $\mathcal{B}$.

$S_{fin}(\mathcal{A},\mathcal{B})$ is the selection hypothesis:
for each sequence $(A_{n}: n\in \mathbb{N})$ of elements of
$\mathcal{A}$ there is a sequence $\{B_{n}\}_{n\in \mathbb{N}}$ of
finite sets such that for each $n$, $B_{n}\subseteq A_{n}$, and
$\bigcup_{n\in\mathbb{N}}B_{n}\in\mathcal{B}$.

\medskip

In this paper, by a cover we mean a nontrivial one, that is,
$\mathcal{U}$ is a cover of $X$ if $X=\bigcup \mathcal{U}$ and
$X\notin \mathcal{U}$.

An open cover $\mathcal{U}$ of a space $X$ is called:

$\bullet$  a $k$-cover if each compact subset $C$ of $X$ is
contained in an element of $\mathcal{U}$ and $X\notin \mathcal{U}$
(i.e. $\mathcal{U}$ is a non-trivial cover);

$\bullet$  a $\gamma_k$-cover if $\mathcal{U}$ is infinite,
$X\notin \mathcal{U}$, and for each compact subset $C$ of $X$ the
set $\{U\in \mathcal{U} : C\nsubseteq U\}$ is finite.

\medskip

For a Tychonoff space $X$, we denote by $C_k(X)$ the space of all
real-valued continuous functions on $X$ with the compact-open
topology. Subbase open sets of $C_k(X)$ are of the form $[A,
U]=\{f\in C(X): f(A)\subset U \}$, where $A$ is a compact subset
of $X$  and $U$ is a non-empty open subset of $\mathbb{R}$.
Sometimes we will write the basic neighborhood of the point $f$ as
$<f,A,\epsilon>$ where $<f,A,\epsilon>:=\{g\in C(X):
|f(x)-g(x)|<\epsilon$ $\forall x\in A\}$, $A$ is a compact subset
of $X$ and $\epsilon>0$.

For a topological space $X$ we denote:

$\bullet$ $\Gamma_k$ --- the family of open $\gamma_k$-covers of
$X$;

$\bullet$ $\mathcal{K}$ --- the family of open $k$-covers of $X$;

$\bullet$ $\mathcal{K}_{cz}^{\omega}$ --- the family of countable
co-zero $k$-covers of $X$;

$\bullet$ $\mathcal{D}$ --- the family of dense subsets of
$C_k(X)$;

$\bullet$ $\mathcal{S}$ --- the family of sequentially dense
subsets of $C_k(X)$;

$\bullet$ $\mathbb{K}(X)$ --- the family of all non-empty compact
subsets of $X$.

\bigskip
A space $X$ is said to be a $\gamma_k$-set if each $k$-cover
$\mathcal{U}$ of $X$ contains a countable set $\{U_n : n\in
\mathbb{N}\}$ which is a $\gamma_k$-cover of $X$ \cite{koc1}.

\section{Main definitions and notation}

$\bullet$ A space $X$ is $R$-separable, if $X$ satisfies
$S_1(\mathcal{D}, \mathcal{D})$ (Def. 47, \cite{bbm1}).

$\bullet$ A space $X$ is selective separability ($M$-separable),
if $X$ satisfies $S_{fin}(\mathcal{D}, \mathcal{D})$.

$\bullet$ A space $X$ is selectively sequentially separable
($M$-sequentially separable), if $X$ satisfies
$S_{fin}(\mathcal{S}, \mathcal{S})$ (Def. 1.2, \cite{bc}).

\medskip

For a topological space $X$ we have the next relations of
selectors for sequences of dense sets of $X$.

\medskip
\begin{center}
$S_1(\mathcal{S},\mathcal{S}) \Rightarrow
S_{fin}(\mathcal{S},\mathcal{S}) \Rightarrow
S_1(\mathcal{S},\mathcal{D}) \Rightarrow
S_{fin}(\mathcal{S},\mathcal{D})$ \\  \, \, $\Uparrow$ \, \, \, \,
\,\, \,  \, \, \, $ \Uparrow $ \,\, \, \, \,\, \, \,
\, \, \, $\Uparrow $ \,\, \, \, \,\, \, \,\, \, \, $\Uparrow$ \\
$S_1(\mathcal{D},\mathcal{S}) \Rightarrow
S_{fin}(\mathcal{D},\mathcal{S}) \Rightarrow
S_1(\mathcal{D},\mathcal{D}) \Rightarrow
S_{fin}(\mathcal{D},\mathcal{D})$

\end{center}

\medskip

\bigskip

We write $\Pi (\mathcal{A}_x, \mathcal{B}_x)$ without specifying
$x$, we mean $(\forall x) \Pi (\mathcal{A}_x, \mathcal{B}_x)$.

$\bullet$  A space $X$ has {\it property $\alpha_2$} ($\alpha_2$
in the sense of Arhangel'skii), if $X$ satisfies
$S_{1}(\Gamma_x,\Gamma_x)$ \cite{arh0}.

$\bullet$ A space $X$ has {\it property $\alpha_4$} ($\alpha_4$ in
the sense of Arhangel'skii), if $X$ satisfies
$S_{fin}(\Gamma_x,\Gamma_x)$ \cite{arh0}.

\bigskip

 So we have three types of topological properties
described through the selection principles:

$\bullet$  local properties of the form $S_*(\Phi_x,\Psi_x)$;

$\bullet$  global properties of the form $S_*(\Phi,\Psi)$;

$\bullet$  semi-local of the form $S_*(\Phi,\Psi_x)$.

\medskip

In a series of papers it was demonstrated that $\gamma$-covers,
Borel covers, $k$-covers play a key role in function spaces
(\cite{cmkm,koc,koc1,koc2,mkn,mkm,mc,os1, os2,os4,os5,pp,sak1} and
many others). We continue to investigate applications of
$k$-covers in function spaces with the compact-open topology.

A great attention has recently received the notions of selective
separability  and selectively sequentially separable
($S_{fin}(\mathcal{S},\mathcal{S})$) \cite{bbm1,bbm2,glm,gs}. In
this paper, we have gave characterizations for $C_k(X)$ to satisfy
$S_{fin}(\mathcal{S},\mathcal{S})$, $S_{fin}(\mathcal{S},\Gamma_{
x})$, and $S_{fin}(\Gamma_{x},\Gamma_{x})$.

\section{Main results}

\begin{definition}
A $\gamma_k$-cover $\mathcal{U}$ of co-zero sets of $X$  is {\bf
$\gamma_k$-shrinkable} if there exists a $\gamma_k$-cover $\{F(U)
: U\in \mathcal{U}\}$ of zero-sets of $X$ with $F(U)\subset U$ for
every $U\in \mathcal{U}$.
\end{definition}

\medskip

For a topological space $X$ we denote:

$\bullet$ $\Gamma^{sh}_k$ --- the family of $\gamma_k$-shrinkable
$\gamma_k$-covers of $X$.


\medskip

Similar to the proof that
$S_{1}(\mathcal{K},\Gamma_k)=S_{fin}(\mathcal{K},\Gamma_{k})$
(Theorem 5 in \cite{koc1}), we prove the following

\begin{lemma}\label{lem2} For a space $X$ the following are equivalent:

\begin{enumerate}

\item  $X$ satisfies $S_{fin}(\Gamma^{sh}_k,\Gamma_k)$;

\item  $X$ satisfies $S_{1}(\Gamma^{sh}_k,\Gamma_k)$.

\end{enumerate}

\end{lemma}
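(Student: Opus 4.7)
The implication $(2)\Rightarrow(1)$ is trivial: given an $S_1$-selection $U_n\in\mathcal{U}_n$ with $\{U_n:n\in\mathbb{N}\}\in\Gamma_k$, take $F_n=\{U_n\}$, and these finite sets witness $S_{fin}$. The content lies in $(1)\Rightarrow(2)$, which I would prove by a diagonalisation modelled on the proof of Theorem~5 of~\cite{koc1}.

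Let $(\mathcal{U}_n)$ be a sequence in $\Gamma^{sh}_k$. For each $n$ fix an injective enumeration $\mathcal{U}_n=\{U^n_k:k\in\mathbb{N}\}$ with a zero-set shrinkage $F^n_k\subset U^n_k$. For each $m\in\mathbb{N}$ form the diagonal family
\[
\mathcal{V}_m=\Bigl\{\bigcap_{n=1}^{m} U^n_k : k\in\mathbb{N}\Bigr\},
\]
together with the candidate shrinkage $\bigl\{\bigcap_{n=1}^{m} F^n_k : k\in\mathbb{N}\bigr\}$. Finite intersections of co-zero (resp.\ zero) sets remain co-zero (resp.\ zero), and for every compact $K\subset X$ the union over $n\le m$ of the finite exceptional sets $\{k:K\nsubseteq U^n_k\}$ is still finite, which delivers the $\gamma_k$-cover property on the cofinite complement. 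Hence $\mathcal{V}_m\in\Gamma^{sh}_k$ for each $m$.

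Now apply $S_{fin}(\Gamma^{sh}_k,\Gamma_k)$ to $(\mathcal{V}_m)$ to obtain finite $G_m\subset\mathcal{V}_m$ with $\bigcup_m G_m\in\Gamma_k$, and enumerate this union as $\{W_j:j\in\mathbb{N}\}$ with $W_j=\bigcap_{n=1}^{m_j} U^n_{k_j}$. Because $\bigcup_m G_m$ is infinite and each $G_m$ is finite, the levels $m_j$ are unbounded in $\mathbb{N}$, so one can inductively pick $j_n$ strictly increasing with $m_{j_n}\ge n$. Set $U_n=U^n_{k_{j_n}}\in\mathcal{U}_n$; the inclusion $W_{j_n}\subset U_n$ transfers the $\gamma_k$-cover property, for any compact $K$ only finitely many $W_j$ fail to contain $K$, hence only finitely many indices $j_n$ fail, and therefore only finitely many $U_n$ miss $K$.

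The main obstacle is the infiniteness clause in the definition of a $\gamma_k$-cover, which must be verified both for the auxiliary families $\mathcal{V}_m$ and for the final collection $\{U_n:n\in\mathbb{N}\}$. For the latter this follows from the standard observation that in an infinite Hausdorff space a finite collection of proper open sets is never a $k$-cover (omit one point from each member), so the already-established $\gamma_k$-property together with $X\notin\{U_n\}$ precludes $\{U_n\}$ from being finite; the same remark, combined with injectivity of the enumerations, handles each $\mathcal{V}_m$. The remaining bookkeeping is routine.
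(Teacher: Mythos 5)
Your argument is correct, and its skeleton is the same as the paper's (both modelled on Ko\v{c}inac's Theorem~5): refine the given covers by finite intersections, apply $S_{fin}$ to the refined sequence, and then diagonalize to extract one member from each $\mathcal{U}_n$. The genuine difference is how the auxiliary covers are formed. The paper takes $\mathcal{V}_n$ to be the family of \emph{all} intersections $U_{1,k_1}\cap U_{2,k_2}\cap\dots\cap U_{n,k_n}$ with independent indices, whereas you take only the same-index (``diagonal'') intersections $\bigcap_{n\le m}U^n_k$. This is not merely cosmetic: under the paper's definition of a $\gamma_k$-cover, where $\{U\in\mathcal{U}: C\nsubseteq U\}$ must be finite \emph{as a family of sets}, the all-tuples product can fail to be a $\gamma_k$-cover --- for instance, for $X=\mathbb{N}$ discrete and $\mathcal{U}_1=\mathcal{U}_2=\{\mathbb{N}\setminus\{n\}: n\in\mathbb{N}\}$, the sets $\mathbb{N}\setminus\{1,j\}$ give infinitely many distinct members of $\mathcal{V}_2$ omitting the compact set $\{1\}$ --- while your diagonal family is easily seen to lie in $\Gamma^{sh}_k$, since for each compact $K$ the failing indices lie in the finite set $\bigcup_{n\le m}\{k: K\nsubseteq U^n_k\}$, and zero-sets (resp.\ co-zero sets) are closed under finite intersections. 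So your variant buys a sound verification of the key membership $\mathcal{V}_m\in\Gamma^{sh}_k$, at the modest cost of needing injective enumerations by $\mathbb{N}$; like the paper, you tacitly assume the covers are countable, which should be justified by first passing to countably infinite subfamilies on which the shrinking assignment $U\mapsto F(U)$ is injective (such subfamilies are again $\gamma_k$-shrinkable $\gamma_k$-covers). Your extraction step (choosing $j_n$ strictly increasing with $m_{j_n}\ge n$ and taking the $n$-th factor of $W_{j_n}$) matches the paper's coordinate-picking argument, and your handling of the infiniteness clause by the finite-compact-set trick is fine --- note only that Hausdorffness is not needed there, since finite sets are compact in every space.
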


\begin{proof} $(1)\Rightarrow(2)$. Let $(\mathcal{U}_n : n\in
\mathbb{N})$ be a sequence of (countable) $\gamma_k$-shrinkable
$\gamma_k$-covers of $X$; suppose that for each $n\in \mathbb{N}$,
$\mathcal{U}_n=\{U_{n,m} : m\in \mathbb{N}\}$. For each $n\in
\mathbb{N}$, let $\mathcal{V}_n$ denote the family of sets of the
form $U_{1,k_1}\cap U_{2,k_2}\cap...\cap U_{n,k_n}$. Then
$(\mathcal{V}_n : n\in \mathbb{N})$ is a sequence of
$\gamma_k$-shrinkable $\gamma_k$-covers of $X$. Since $X$
satisfies $S_{fin}(\Gamma^{sh}_k,\Gamma_k)$ choose for each $n\in
\mathbb{N}$ a finite subset $\mathcal{W}_n$ of $\mathcal{V}_n$
such that $\bigcup_{n\in \mathbb{N}} \mathcal{W}_n$ is a
$\gamma_k$-cover of $X$. (Note that some $\mathcal{W}_n's$ can be
empty.)

As $\bigcup_{n\in \mathbb{N}} \mathcal{W}_n$ is infinite and all
$\mathcal{W}'s$ are finite, there exists a sequence
$m_1<m_2<...<m_p<...$ in $\mathbb{N}$ such that for each $i\in
\mathbb{N}$ we have $\mathcal{W}_{m_i}\setminus \bigcup_{j<i}
\mathcal{W}_{m_j}\neq \emptyset$. Choose an element $W_{m_i}\in
\mathcal{W}_{m_i}\setminus \bigcup_{j<i} \mathcal{W}_{m_j}$, $i\in
\mathbb{N}$, and fix its representation $W_{m_i}=U_{1,k_1}\cap
U_{2,k_2}\cap...\cap U_{m_i,k_{m_i}}$ as above.

Since each infinite subset of a $\gamma_k$-cover is also a
$\gamma_k$-cover, we have that the set $\{W_{m_i}: i\in
\mathbb{N}\}$ is a $\gamma_k$-cover of $X$. For each $n\leq m_1$
let $U_n\in \mathcal{U}_n$ be the $n$-th coordinate of $W_{m_1}$
in the chosen representation of $W_{m_1}$, and for each $n\in
(m_i, m_{i+1}]$, $i\geq 1$, let $U_n\in \mathcal{U}_n$ be the
$n$-th coordinate of $W_{m_{i+1}}$ in the above representation of
$W_{m_{i+1}}$. Observe that each $U_n\supset W_{m_{i+1}}$.
Therefore, we obtain a sequence $(U_n :n\in \mathbb{N})$ of
elements, one from each $\mathcal{U}_n$, which form a
$\gamma_k$-cover of $X$ and show that $X$ satisfies
$S_{1}(\Gamma^{sh}_k,\Gamma_k)$.
\end{proof}

The symbol ${\bf 0}$ denote the constantly zero function in
$C_k(X)$. Because $C_k(X)$ is homogeneous we can work with ${\bf
0}$ to study local and semi-local properties of $C_k(X)$.

\begin{theorem}\label{th07} For a Tychonoff space $X$  the following statements are
equivalent:

\begin{enumerate}

\item  $C_k(X)$ satisfies $S_{1}(\Gamma_{\bf 0},\Gamma_{\bf 0})$
[property $\alpha_2$];

\item  $X$ satisfies $S_{1}(\Gamma^{sh}_k,\Gamma_k)$.

\end{enumerate}

\end{theorem}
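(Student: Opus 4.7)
The plan is to exploit the standard duality between sequences in $C_k(X)$ that converge to $\mathbf{0}$ and $\gamma_k$-covers of $X$. On the analysis side, a sequence $f_m\to\mathbf{0}$ iff the co-zero sets $\{x:|f_m(x)|<\varepsilon\}$ (for fixed $\varepsilon$) form a $\gamma_k$-cover of $X$, with their natural zero-set shrinkings $\{x:|f_m(x)|\le\varepsilon/2\}$ witnessing $\gamma_k$-shrinkability. On the topological side, every $\gamma_k$-shrinkable cover yields Urysohn functions that converge to $\mathbf{0}$.

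For $(2)\Rightarrow(1)$, I would start with a sequence $(S_n)_{n\in\mathbb{N}}$ of convergent sequences to $\mathbf{0}$, writing $S_n=\{f_{n,m}:m\in\mathbb{N}\}$. Set $U_{n,m}=\{x:|f_{n,m}(x)|<1/n\}$ and $F_{n,m}=\{x:|f_{n,m}(x)|\le 1/(2n)\}$; discarding the (harmless) indices where $U_{n,m}=X$, one checks that $\mathcal{U}_n=\{U_{n,m}:m\in\mathbb{N}\}$ is a $\gamma_k$-cover with zero-set shrinking $\{F_{n,m}\}$, so $\mathcal{U}_n\in\Gamma^{sh}_k$. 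Apply $S_1(\Gamma^{sh}_k,\Gamma_k)$ to extract $U_{n,m(n)}$ forming a $\gamma_k$-cover. Given a compact $K\subset X$ and $\varepsilon>0$, for $n_0$ with $1/n_0<\varepsilon$ the $\gamma_k$-property yields that for all but finitely many $n\ge n_0$ one has $K\subset U_{n,m(n)}$, hence $|f_{n,m(n)}(x)|<1/n<\varepsilon$ on $K$; so $\{f_{n,m(n)}\}\in\Gamma_{\mathbf{0}}$.

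For $(1)\Rightarrow(2)$, I would start with $(\mathcal{U}_n)_n$ in $\Gamma^{sh}_k$ with shrinkings $\{F(U):U\in\mathcal{U}_n\}$. Since $F(U)$ is a zero-set and $X\setminus U$ is disjoint from $F(U)$ and also a zero-set (as $U$ is co-zero), Urysohn's lemma in Tychonoff spaces gives continuous $f_U:X\to[0,1]$ with $f_U\equiv 0$ on $F(U)$ and $f_U\equiv 1$ on $X\setminus U$. Then $S_n:=\{f_U:U\in\mathcal{U}_n\}$ converges to $\mathbf{0}$ in $C_k(X)$: for a compact $K$ and any $\varepsilon>0$, all but finitely many $U\in\mathcal{U}_n$ satisfy $K\subset F(U)$, whence $f_U|_K\equiv 0$. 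Apply $S_1(\Gamma_{\mathbf{0}},\Gamma_{\mathbf{0}})$ to obtain $f_{U_n}\in S_n$ with $\{f_{U_n}\}\in\Gamma_{\mathbf{0}}$. If some compact $K$ violated the $\gamma_k$-property for $\{U_n\}$, one could pick $x_n\in K\setminus U_n$ for infinitely many $n$, giving $f_{U_n}(x_n)=1$ and contradicting uniform convergence on $K$.

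The main technical nuisances will be the definitional side-conditions on $\gamma_k$-covers, namely requiring infiniteness of the resulting family and $X\notin\mathcal{U}$. In the $(2)\Rightarrow(1)$ direction, one has to absorb the case $U_{n,m}=X$ (i.e., $\sup_X|f_{n,m}|<1/n$), which is benign since those $f_{n,m}$ already converge uniformly; in the $(1)\Rightarrow(2)$ direction, one needs the selected $\{f_{U_n}\}$ to be an infinite set, which follows automatically from membership in $\Gamma_{\mathbf{0}}$ once the Urysohn assignment $U\mapsto f_U$ is injective enough, achievable by distinguishing functions using their shrinkings. Everything else is routine once the dictionary between compact-open convergence and $\gamma_k$-covers is fixed.
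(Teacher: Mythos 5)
Your proposal is correct and follows essentially the same route as the paper's proof: from shrinkable $\gamma_k$-covers you build Urysohn-type functions that are $0$ on the zero-set shrinking and $1$ off the co-zero set, and conversely from sequences converging to $\mathbf{0}$ you take the co-zero covers $\{|f_{n,m}|<1/n\}$ with their closed shrinkings, transferring the selections back and forth exactly as in the paper. Your extra remarks about the degenerate case $U_{n,m}=X$ and the infiniteness side conditions only tighten details the paper leaves implicit.
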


\begin{proof}  $(1)\Rightarrow(2)$. Let $(\mathcal{U}_n : n\in
\mathbb{N})$ be a sequence of (countable) $\gamma_k$-shrinkable
$\gamma_k$-covers of $X$; suppose that for each $n\in \mathbb{N}$,
$\mathcal{U}_n=\{U_{n,m} : m\in \mathbb{N}\}$ and $\{F(U_{n,m}) :
U_{n,m}\in \mathcal{U}_n\}$ is a $\gamma_k$-cover of zero-sets of
$X$ with $F(U_{n,m})\subset U_{n,m}$ for every $U_{n,m}\in
\mathcal{U}_n$. Consider $S_n=\{f_{n,m}\in C(X) :
f_{n,m}\upharpoonright F(U_{n,m})\equiv 0$,
$f_{n,m}\upharpoonright (X\setminus U_{n,m})\equiv 1$ for
$U_{n,m}\in \mathcal{U}_n\}$. Since $\{F(U_{n,m}) : U_{n,m}\in
\mathcal{U}_n\}$ is a $\gamma_k$-cover of $X$, then $S_n\in
\Gamma_{\bf 0}$. By (1), there is $\{f_{n,m(n)}: n\in
\mathbb{N}\}$ such that $f_{n,m(n)}\in S_n$ and $\{f_{n,m(n)}:
n\in \mathbb{N}\}\in \Gamma_{\bf 0}$. Claim that $\{U_{n,m(n)}:
n\in \mathbb{N}\}\in \Gamma_k$. Suppose $A\in \mathbb{K}(X)$ and
$W=[A,(-\frac{1}{2},\frac{1}{2})]$ is a base neighborhood of ${\bf
0}$ then there exist $n'\in \mathbb{N}$ such that $f_{n,m(n)}\in
W$ for every $n>n'$. It follows that $A\subset U_{n,m(n)}$ for
every $n>n'$.

$(2)\Rightarrow(1)$. Let $S_n\in \Gamma_{\bf 0}$ for every $n\in
\mathbb{N}$; suppose that for each $n\in \mathbb{N}$,
$S_n=\{f_{n,j}: j\in \mathbb{N}\}$.

 Consider
$\mathcal{V}_n=\{f_{n,j}^{-1}((-\frac{1}{n},\frac{1}{n})) :
f_{n,j}\in S_n\}$ for each $n\in \mathbb{N}$. Note that
$\mathcal{W}_n=\{f_{n,j}^{-1}([-\frac{1}{n+1},\frac{1}{n+1}]) :
f_{n,j}\in S_n\}$ is a $\gamma_k$-cover of zero-sets of $X$.
Hence, $\mathcal{V}_n\in \Gamma^{sh}_k$ for each $n\in
\mathbb{N}$. By (2), there is $\{f_{n,j(n)} : n\in \mathbb{N}\}$
such that $\{f_{n,j(n)}^{-1}((-\frac{1}{n},\frac{1}{n})) : n\in
\mathbb{N}\}\in \Gamma_k$. Claim that $\{f_{n,j(n)} : n\in
\mathbb{N}\}\in \Gamma_{\bf 0}$. Let $[A,(-\epsilon, \epsilon)]$
be a base neighborhood of ${\bf 0}$ where $A\in \mathbb{K}(X)$ and
$\epsilon>0$. There is $n'\in \mathbb{N}$ such that $A\subset
f_{n,j(n)}^{-1}((-\frac{1}{n},\frac{1}{n}))$ for each $n>n'$.
There is $n''>n'$ such that $\frac{1}{n''}<\epsilon$, hence,
$f_{n,j(n)}\in [A,(-\frac{1}{n''}, \frac{1}{n''})]\subset
[A,(-\epsilon, \epsilon)]$ for each $n>n''$.

\end{proof}

\begin{proposition}(Proposition 4.2 in \cite{bbm2}). Every
selective sequentially separable space is sequentially separable.
\end{proposition}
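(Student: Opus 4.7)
The plan is to feed the selection principle $S_{fin}(\mathcal{S},\mathcal{S})$ a trivial input, namely the constant sequence $S_n = X$, and read off a countable sequentially dense set from the output.

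The first step is to check that $X \in \mathcal{S}$. This is immediate: every $x \in X$ is the limit of the constant sequence $(x,x,x,\ldots)$, so $x \in [X]_{seq}$. Note that although the paper's separate notion of a \emph{set} $A$ converging to $x$ requires $A$ to be infinite with $x \notin A$, the definition of sequential density uses ``convergent sequence'' in the standard topological sense (all but finitely many terms in every neighborhood of the limit), and constant sequences qualify.

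Next, I would set $S_n := X$ for all $n \in \mathbb{N}$ and apply the hypothesis that $X$ satisfies $S_{fin}(\mathcal{S},\mathcal{S})$. This yields finite subsets $F_n \subset X$ such that $D := \bigcup_{n \in \mathbb{N}} F_n$ is sequentially dense in $X$. Since $D$ is a countable union of finite sets it is countable, so $D$ witnesses that $X$ is sequentially separable.

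There is really no obstacle here: the argument is essentially one line once one notices $X \in \mathcal{S}$. The only conceptual point worth flagging is that selective sequential separability plays two roles simultaneously in this implication, serving both as the hypothesis that provides a selection and as the requirement that the resulting set land in $\mathcal{S}$; it is exactly this latter feature that lets us conclude \emph{sequential} separability rather than merely separability.
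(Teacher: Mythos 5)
Your proof is correct: $X$ itself is sequentially dense in $X$ (via constant sequences, which the paper's definition of sequential density permits), so feeding the constant sequence $S_n=X$ into $S_{fin}(\mathcal{S},\mathcal{S})$ immediately yields a countable sequentially dense set. The paper only cites this proposition from Bella--Bonanzinga--Matveev without reproducing a proof, and your one-line argument is exactly the standard one, including the worthwhile observation that the ``infinite set converging to $x$'' notion is irrelevant here since sequential density is phrased in terms of convergent sequences.
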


We shall prove the following theorem under the condition that the
space $C_k(X)$ is sequentially separable. Denote this condition
for the space $X$ as $X\models SS$.

\begin{theorem}\label{th7} For a Tychonoff space $X$ such that $X\models SS$  the following statements are
equivalent:

\begin{enumerate}

\item  $C_k(X)$ satisfies $S_{1}(\mathcal{S},\mathcal{S})$;

\item  $C_k(X)$ satisfies $S_{1}(\mathcal{S},\Gamma_{\bf 0})$;

\item  $C_k(X)$ satisfies $S_{1}(\Gamma_{\bf 0},\Gamma_{\bf 0})$
[property $\alpha_2$];

\item  $X$ satisfies $S_{1}(\Gamma^{sh}_k,\Gamma_k)$;

\item $C_k(X)$ satisfies $S_{fin}(\mathcal{S},\mathcal{S})$
[selective sequentially separable];

\item  $C_k(X)$ satisfies $S_{fin}(\mathcal{S},\Gamma_{\bf 0})$;

\item  $C_k(X)$ satisfies $S_{fin}(\Gamma_{\bf 0},\Gamma_{\bf 0})$
[property $\alpha_4$];

\item  $X$ satisfies $S_{fin}(\Gamma^{sh}_{k},\Gamma_{k})$.

\end{enumerate}

\end{theorem}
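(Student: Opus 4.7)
The strategy is to close a cycle of implications among (1)--(8), leveraging the equivalences $(3) \Leftrightarrow (4)$ from Theorem 3.3 and $(4) \Leftrightarrow (8)$ from Lemma 3.2, the homogeneity of $C_k(X)$ as a topological group, and a countable sequentially dense set $D = \{d_k : k \in \mathbb{N}\} \subseteq C_k(X)$ provided by $X \models SS$.

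First I would prove the $S_{fin}$ analog of Theorem 3.3, namely $(7) \Leftrightarrow (8)$, by repeating its explicit construction: from $\gamma_k$-shrinkable $\gamma_k$-covers $\mathcal{U}_n = \{U_{n,m}\}$ with zero-set shrinkings $F(U_{n,m})$, take continuous $f_{n,m}$ with $f_{n,m}|_{F(U_{n,m})} \equiv 0$ and $f_{n,m}|_{X \setminus U_{n,m}} \equiv 1$, so that $S_n = \{f_{n,m}\} \in \Gamma_{\bf 0}$; applying $(7)$ to $(S_n)$ produces finite $T_n$ with $\bigcup T_n \in \Gamma_{\bf 0}$, whose corresponding $U_{n,m}$'s form a $\gamma_k$-cover of $X$. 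Together with the known equivalences this yields $(3) \Leftrightarrow (4) \Leftrightarrow (7) \Leftrightarrow (8)$. Then I record the easy implications: $(1) \Rightarrow (5)$, $(2) \Rightarrow (6)$, $(3) \Rightarrow (7)$ by $S_1 \Rightarrow S_{fin}$; $(3) \Rightarrow (2)$ and $(7) \Rightarrow (6)$ by the sub-selection trick (given sequentially dense $S_n$, pick $A_n \subseteq S_n$ in $\Gamma_{\bf 0}$ and apply $(3)$ or $(7)$); and $(5) \Rightarrow (6)$ by extracting a sequence $\{h_k\}$ converging to ${\bf 0}$ from the sequentially dense union produced by $(5)$ and setting $F_n = \{h_k : h_k \in G_n\}$.

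The central step is $(3) \Rightarrow (1)$ and $(7) \Rightarrow (5)$. By homogeneity of $C_k(X)$, $\alpha_2$ (resp.\ $\alpha_4$) at ${\bf 0}$ extends to every point. Partition $\mathbb{N} = \bigsqcup_k N_k$ into infinite blocks, one per $d_k$. Given $(S_n)$ sequentially dense, for $n \in N_k$ the translate $S_n - d_k$ is sequentially dense, so pick $A_n \subseteq S_n - d_k$ in $\Gamma_{\bf 0}$. Apply $(3)$ or $(7)$ to $(A_n)$ and shift the output back by $d_k$ to obtain a selection inside each $S_n$ whose restriction to $N_k$ converges to $d_k$; in particular every $d_k$ is a sequential limit of the selection. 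For arbitrary $f \in C_k(X)$, pick $d_{k_j} \to f$; the $N_{k_j}$-sub-selections converging to $d_{k_j}$, after translation by $-d_{k_j} + f$, lie in $\Gamma_f$, and applying $\alpha_2$/$\alpha_4$ at $f$ diagonalizes them into a single sequence from the selection converging to $f$, which proves the selection is sequentially dense.

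The remaining link is an implication from the $\mathcal{S}$-side back into the cluster $\{(3),(4),(7),(8)\}$, which I would handle as $(6) \Rightarrow (7)$: given $(A_n)$ in $\Gamma_{\bf 0}$, form the sequentially dense set $S_n = A_n \cup D$, apply $(6)$ to obtain finite $F_n \subseteq S_n$ with $\bigcup F_n \in \Gamma_{\bf 0}$, and set $B_n = F_n \cap A_n$; the key technical point is to ensure $\bigcup B_n$ is infinite, which requires tailoring the augmentation so that $D$-elements cannot dominate the selection near ${\bf 0}$. This last step is the main obstacle, since $D$ is itself sequentially dense and hence always contains sequences converging to ${\bf 0}$; once it is carried out, combining with all prior implications yields the cycle $(1) \Rightarrow (5) \Rightarrow (6) \Rightarrow (7) \Rightarrow (3) \Rightarrow (1)$ (and its parallel $S_1$ chain through $(2)$), establishing the equivalence of all eight statements.
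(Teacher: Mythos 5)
Your outline never actually closes the cycle: the one implication that would bring the $\mathcal{S}$-side back into the cluster $\{(3),(4),(7),(8)\}$, namely your $(6)\Rightarrow(7)$, is exactly the step you leave unproved, and the route you sketch for it cannot work as stated. If you augment $A_n\in\Gamma_{\bf 0}$ to $S_n=A_n\cup D$ and apply $(6)$, nothing prevents the finite sets $F_n$ from being chosen entirely inside $D$ (indeed $D$, being sequentially dense, contains plenty of sequences converging to ${\bf 0}$), so every $B_n=F_n\cap A_n$ may be empty and no conclusion about $(A_n)$ follows; no concrete ``tailoring of the augmentation'' is given, and it is not clear any exists. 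The paper closes the loop differently, through the base space $X$: from each of the function-space hypotheses (1), (5), (6), (7) it derives $S_{fin}(\Gamma^{sh}_k,\Gamma_k)$ for $X$ (i.e.\ (8), which equals (4) by Lemma~\ref{lem2}) via the construction that takes a $\gamma_k$-shrinkable cover $\{U^m_i\}$ with shrinking $\{F(U^m_i)\}$ and a countable sequentially dense $S=\{h_m\}\subset C_k(X)$ (this is where $X\models SS$ is used) and forms the sequentially dense sets $S_i=\{f^m_i:\ f^m_i\upharpoonright F(U^m_i)=h_m,\ f^m_i\upharpoonright(X\setminus U^m_i)=1\}$; any selection that is sequentially dense, or converges to ${\bf 0}$, then yields the required $\gamma_k$-cover. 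Some implication of this kind from (5)/(6) back to (8) is what your plan is missing, and without it the eight statements are not all linked.

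There are also problems inside your ``central step''. In $(3)\Rightarrow(1)$ the final diagonalization is wrong as written: applying $\alpha_2$ at $f$ to the translated sub-selections produces a sequence of translates $c_n+(f-d_{k_j})$, which are not members of your selection, so it does not witness sequential density of $\{c_n\}$. The correct finish needs no second selection at all: since the global selection $b_n\in A_n$ converges to ${\bf 0}$, choose $n_j\in N_{k_j}$ strictly increasing and use joint continuity of addition to get $c_{n_j}=b_{n_j}+d_{k_j}\to f$; this translation argument is essentially the paper's proof of $(2)\Rightarrow(1)$. Finally, the block scheme does not give $(7)\Rightarrow(5)$: an $S_{fin}$-selection may satisfy $B_n=\emptyset$ for every $n$ outside a single infinite block $N_1$, in which case only $d_1$ need be a sequential limit of $\bigcup_n(B_n+d_{k(n)})$, so the union need not be sequentially dense. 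That particular implication is redundant in your intended cycle, but as presented it is claimed as part of the main argument, and the genuine gap remains the absent implication from (5)/(6) back into the cluster.
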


\begin{proof} $(1)\Rightarrow(4)$. Let $\{\mathcal{U}_i\}\subset \Gamma^{sh}_k$, $\mathcal{U}_i=\{ U^{m}_i :m\in \mathbb{N}\}$ for each
$i\in \mathbb{N}$ and $S=\{h_m: m\in \mathbb{N}\}$ be a countable
sequentially dense subset of $C_k(X)$. For each $i\in \mathbb{N}$
we consider a countable sequentially dense subset $S_i$ of
$C_k(X)$ where $S_i=\{ f^m_i\in C(X) : f^m_i\upharpoonright
F(U^{m}_i)=h_m$ and $f^m_i\upharpoonright (X\setminus U^{m}_i)=1$
for $m \in \mathbb{N} \}$. Since $S$ is a countable sequentially
dense subset of $C_k(X)$, we have that $S_i$ is a countable
sequentially dense subset of $C_k(X)$ for each $i\in \mathbb{N}$.
Let $h\in C(X)$, there is a set $\{h_{m_s}: s\in
\mathbb{N}\}\subset S$ such that $\{h_{m_s}\}_{s\in \mathbb{N}}$
converge to $h$.
 Let $K$
be a compact subset of $X$, $\epsilon>0$ and $W=<h, K,\epsilon>$
be a base neighborhood of $h$, then there is a number $m_0$ such
that $K\subset F(U^{m}_i)$ for $m>m_0$ and $h_{m_s}\in W$ for
$m_s>m_0$. Since $f^{m_s}_i\upharpoonright K=
h_{m_s}\upharpoonright K$ for each $m_s>m_0$, $f^{m_s}_i\in W$ for
each $m_s>m_0$. It follows that a sequence $\{f^{m_s}_i\}_{s\in
\mathbb{N}}$ converge to $h$.

Since  $C_k(X)$ satisfies $S_{1}(\mathcal{S},\mathcal{S})$, there
is a sequence $\{f^{m(i)}_{i}\}_{i\in\mathbb{N}}$ such that for
each $i$, $f^{m(i)}_{i}\in S_i$, and $\{f^{m(i)}_{i}:
i\in\mathbb{N} \}$ is an element of $\mathcal{S}$.

Consider a set $\{U^{m(i)}_{i}: i\in \mathbb{N}\}$.

(a). $U^{m(i)}_{i}\in \mathcal{U}_{i}$.

(b). $\{U^{m(i)}_{i}: i\in \mathbb{N}\}$ is a $\gamma_k$-cover of
$X$.

There is a sequence $\{f^{m(i_{j})}_{i_{j}}\}$ converge to
$\bf{0}$. Let $K$ be a compact subset of $X$ and $U=<$ $\bf{0}$ $,
K, (-1,1)>$ be a base neighborhood of $\bf{0}$, then there exists
$j_0\in \mathbb{N}$ such that $f^{m(i_{j})}_{i_{j}}\in U$ for each
$j>j_0$. It follows that $K\subset U^{m(i_{j})}_{i_{j}}$ for
$j>j_0$. We thus get $X$ satisfies
$S_{fin}(\Gamma^{sh}_k,\Gamma_k)$. By Lemma \ref{lem2},
$S_{fin}(\Gamma^{sh}_k,\Gamma_k)=S_{1}(\Gamma^{sh}_k,\Gamma_k)$.

$(4)\Leftrightarrow(3)$. By Theorem \ref{th07}.

$(3)\Rightarrow(2)$ is immediate.

$(2)\Rightarrow(1)$.  For each $n\in \mathbb{N}$, let $S_n$ be a
sequentially dense subset of $C_k(X)$ and let $\{h_n: n\in
\mathbb{N}\}$ be sequentially dense in $C_k(X)$. Take a sequence
$\{f^m_n: m\in \mathbb{N}\}\subset S_n$ such that $f^m_n\mapsto
h_n$ ($m\mapsto \infty$). Then $f^m_n-h_n\mapsto \bf{0}$
($m\mapsto \infty$). Hence, there exist $f^{m_n}_n$ such that
$f^{m_n}_n-h_n\mapsto \bf{0}$ ($n\mapsto \infty$). We see that
$\{f^{m_n}_n: n\in \mathbb{N}\}$ is sequentially dense. Let $h\in
C_k(X)$ and take a sequence $\{h_{n_j}: j\in \mathbb{N}\}\subset
\{h_n:n\in \mathbb{N}\}$ converging to $h$. Then,
$f^{m_{n_j}}_{n_j}=(f^{m_{n_j}}_{n_j}-h_{n_j})+h_{n_j}\mapsto h$
($j\mapsto \infty$).

$(4)\Leftrightarrow(8)$. By Theorem \ref{th07}.

The proofs of the remaining implications are similar to those
proved above.

\end{proof}

A well-known that if $X$ is hemicompact then $C_k(X)$ is
metrizable. It follows that $C_k(X)$ is sequential separable for a
hemicompact space $X$ with $iw(X)=\aleph_0$. But, for general
case, author does not know the answer to the next question.

\medskip

{\bf Question 1.} Characterize a Tychonoff space $X$ such that a
space $C_k(X)$ is sequential separable (i.e.,  what is the
condition $X\models SS$ ?)

\medskip

\begin{proposition} (Corollary 4.8 (Dow-Barman) in \cite{bbm2}) Every
Fr$\acute{e}$chet-Urysohn separable $T_2$ space is selectively
separable (hence, selectively sequentially separable).
\end{proposition}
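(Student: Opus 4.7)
The plan is to verify that $X$ satisfies $S_{fin}(\mathcal{D},\mathcal{D})$ directly; the parenthetical ``hence'' about $S_{fin}(\mathcal{S},\mathcal{S})$ is then immediate, since in any Fr\'echet-Urysohn space the sequential closure agrees with the closure, so the families $\mathcal{D}$ and $\mathcal{S}$ coincide.

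Fix a countable dense set $\{x_m : m\in \mathbb{N}\}\subseteq X$ and a sequence $(D_n : n\in \mathbb{N})$ of dense subsets of $X$. Since $\{x_m\}$ is dense, the set $\bigcup_n F_n$ will be dense as soon as $x_m\in \overline{\bigcup_n F_n}$ for each $m$, which, by the Fr\'echet-Urysohn property at $x_m$, amounts to producing a non-trivial sequence in $\bigcup_n F_n$ converging to $x_m$. To treat all $m$'s simultaneously I would partition $\mathbb{N}$ into pairwise disjoint infinite pieces $\{A_m : m\in \mathbb{N}\}$ and use $\{F_n : n\in A_m\}$ to carry the $m$-th convergent sequence. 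For each fixed $m$, the union $\bigcup_{n\in A_m} D_n$ is dense (since each summand is), so $x_m$ lies in its closure, and Fr\'echet-Urysohn supplies a sequence $(z^m_j)_j\subseteq \bigcup_{n\in A_m}D_n$ with $z^m_j\to x_m$. Declaring $F_n:=\{z^m_j : z^m_j\in D_n\}$ for $n\in A_m$ (and $F_n:=\emptyset$ for any leftover indices) places the convergent sequence inside $\bigcup_n F_n$, which settles the reduction above.

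The main obstacle is guaranteeing that each $F_n$ is actually \emph{finite}: a priori, infinitely many terms of $(z^m_j)$ may pile up inside a single $D_{n_0}$. The fix is a thinning argument that exploits the $T_2$ hypothesis (so that finitely many already-chosen points may be removed from a set without affecting whether $x_m$ lies in its closure) together with the observation that, for any finite $E\subseteq A_m$, the union $\bigcup_{n\in A_m\setminus E} D_n$ is still a union of dense sets and hence dense. This lets one reapply Fr\'echet-Urysohn inside progressively smaller subfamilies and successively prune surplus terms, arriving in the end at a convergent sequence whose terms are spread over infinitely many distinct $D_n$'s with only finitely many terms per $D_n$. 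Once this thinning is organized, the $F_n$'s furnished above are finite subsets of $D_n$ whose union is dense in $X$, establishing $S_{fin}(\mathcal{D},\mathcal{D})$.
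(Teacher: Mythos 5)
The paper itself offers no proof of this proposition; it simply quotes Corollary 4.8 of \cite{bbm2} (the Barman--Dow theorem), so your attempt has to stand on its own. Your reduction is the standard one and is fine: split $\mathbb{N}$ into infinite blocks $A_m$ and, for each point $x_m$ of a fixed countable dense set, find finite $F_n\subseteq D_n$ ($n\in A_m$) with $x_m\in\overline{\bigcup_{n\in A_m}F_n}$; the parenthetical ``hence'' is also correctly justified, since in a Fr\'echet--Urysohn space the dense and the sequentially dense subsets form the same family. The genuine gap is exactly at what you call the main obstacle: the thinning argument you sketch does not deliver the finiteness. If the sequence supplied by the Fr\'echet--Urysohn property concentrates in a single $D_{n_0}$, discarding $D_{n_0}$ (or finitely many already-chosen points) and reapplying the Fr\'echet--Urysohn property to the still-dense union $\bigcup_{n\in A_m\setminus E}D_n$ only produces another sequence with the same possible defect; the pruning need not terminate in finitely many steps, and after infinitely many steps you are left with countably many convergent sequences from which you must pick finitely many terms each so that $x_m$ remains in the closure of the union. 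That final step is an $\alpha_4$/countable-fan-tightness type principle which Fr\'echet--Urysohn alone does not provide: in the sequential fan $S_\omega$ (countably many convergent sequences with their limits identified) any selection of finitely many terms from each canonical sequence leaves the non-isolated point outside the closure. Note also that the role you assign to $T_2$ (removing finitely many chosen points without losing $x_m$ from the closure) only needs $T_1$ and is not where Hausdorffness actually enters.

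The cited Dow--Barman argument gets around this by using the density of the $D_n$ at points \emph{other than} $x_m$. Fix a sequence $a_k\to x_m$ with $a_k\ne x_m$. If infinitely many $a_k$ are isolated, they belong to every $D_n$ and can be selected as singletons, finishing at once. Otherwise, for each $k$ choose a sequence $(b^k_j)_j$ of distinct points of $D_{n_k}\setminus\{x_m\}$ with $b^k_j\to a_k$ (possible because a dense set meets every nonempty open set in an infinite set when the space is $T_1$). Since each $a_k$ lies in the closure of the set $B=\{b^k_j : k,j\}$ and $x_m\in\overline{\{a_k\}}$, we get $x_m\in\overline{B}$, so the Fr\'echet--Urysohn property yields a sequence $(c_i)\subseteq B$ converging to $x_m$. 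Hausdorffness is used precisely here: the $k$-th column converges to $a_k\ne x_m$, so $\{c_i\}$ can meet each column only in a finite set, and $F_{n_k}:=\{c_i\}\cap\{b^k_j : j\}$ are the desired finite selections with $x_m\in\overline{\bigcup_k F_{n_k}}$. Your sketch never invokes density of the $D_n$ away from $x_m$, which is the ingredient that circumvents the $\alpha_4$ obstruction, so as written the proof is incomplete at its crucial step.
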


A well-known that a Tychonoff space $X$ the space $C_k(X)$ is
Fr$\acute{e}$chet-Urysohn if and only if  $X$ satisfies
$S_{1}(\mathcal{K},\Gamma_k)$ (\cite{llt}).

\medskip

Recall that the $i$-weight $iw(X)$ of a space $X$ is the smallest
infinite cardinal number $\tau$ such that $X$ can be mapped by a
one-to-one continuous mapping onto a Tychonoff space of the weight
not greater than $\tau$.

A Tychonoff space $X$ the space $C_{k}(X)$ is separable iff
$iw(X)=\aleph_0$ \cite{nob}.

\medskip

{\bf Question 2.} Is there a Tychonoff space $X$ with
$iw(X)=\aleph_0$ such that $C_k(X)$ satisfies
$S_{1}(\mathcal{S},\mathcal{S})$, but $C_k(X)$ is not
Fr$\acute{e}$chet-Urysohn (i.e. $X$ satisfies
$S_{1}(\Gamma^{sh}_k,\Gamma_k)$, but it has not property
$S_{1}(\mathcal{K},\Gamma_k)$) ?


\bigskip
\bibliographystyle{model1a-num-names}
\bibliography{<your-bib-database>}

\begin{thebibliography}{10}

\bibitem{arh0}
A.V. Arhangel'skii, \textit{The frequency spectrum of a
topological space and the classification of spaces}, Soviet Math.
Dokl. 13, (1972), 1186--1189.


\bibitem{bbm1}
A. Bella, M. Bonanzinga, M. Matveev, \textit{Variations of
selective separability}, Topology and its Applications, 156,
(2009), 1241--1252.

\bibitem{bbm2}
A. Bella, M. Bonanzinga, M. Matveev, \textit{Sequential+ separable
vs sequentially separable and another variation on selective
separability}, Cent. Eur. J. Math., 11(3), (2013), 530-538.



\bibitem{bc}
A. Bella, C. Costantini, \textit{Sequential Separability vs
Selective Sequential Separability}, Filomat 29:1, (2015),
121--124.

\bibitem{cmkm}
A. Caserta, G. Di Maio, Lj.D.R. Ko$\check{c}$inac and E.
Meccariello, \textit{Applications of $k$-covers II}, Topology and
its Applications, 153, (2006), 3277-3293.



\bibitem{glm}
P. Gartside, J.T.H. Lo, A. Marsh \textit{Sequential density},
Topology and its Applications, 130, (2003), p.75--86.


\bibitem{gs}
G. Gruenhage, M. Sakai, \textit{Selective separability and its
variations}, Topology and its Applications, 158:12, (2011),
p.1352--1359.



\bibitem{koc}
Lj.D.R. Ko$\check{c}$inac, \textit{Selection principles and
continuous images}, Cubo Math. J. 8 (2) (2006) 23--31.

\bibitem{koc1}
Lj.D.R. Ko$\check{c}$inac, \textit{$\gamma$-sets, $\gamma_k$-sets
and hyperspaces}, Mathematica Balkanica 19, (2005), 109-118.

\bibitem{koc2}
Lj.D.R. Ko$\check{c}$inac, \textit{Closure properties of function
spaces}, Applied General Topology 4, (2), (2003), 255–261.

\bibitem{llt}
S. Lin, C. Liu, H. Teng, \textit{Fan tightness and strong
Fr$\acute{e}$chet property of $C_k(X)$}, Advances in Mathematics
(Beiging) 23 (3) (1994) 234–237 (in Chinese); MR. 95e:54007, Zbl.
808.54012.

\bibitem{mkn}
G.Di Maio, Lj.D.R. Ko$\check{c}$inac, T.Nogura \textit{Convergence
properties of hyperspaces}, J. Korean Math. Soc. 44 (2007), n.4,
845-854.

\bibitem{mkm}
G.Di Maio, Lj.D.R. Ko$\check{c}$inac, E.Meccariello
\textit{Applications of $k$-covers}, Acta Mathematica Sinica,
English Series vol. 22, n.4, (2006), 1151-1160.


\bibitem{ma}
A.J. Marsh, \textit{Topology of function spaces}, Doctoral
Dissertation, University of Pittsburgh, (2004).


\bibitem{mc}
 R.A. McCoy, \textit{Function spaces which are $k$-spaces}, Topology Proceedings 5 (1980), 139-146.








\bibitem{nob}
 N. Noble, \textit{The density character of functions spaces}, Proc. Amer. Math. Soc. (1974), V.42, is.I.-P., 228--233.


\bibitem{os1}
A.V. Osipov, \textit{Application of selection principles in the
study of the properties of function spaces}, Acta Math. Hungar.,
154(2), (2018), 362-377.


\bibitem{os2}
A.V. Osipov, \textit{Classification of selectors for sequences of
dense sets of $C_p(X)$}, Topology and its Applications, 242,
(2018), 20-32.

\bibitem{os4}
A.V. Osipov, \textit{Classification of selectors for sequences of
dense sets of Baire functions}, submitted.

\bibitem{os5}
A.V. Osipov, \textit{The functional characteristics of the
Rothberger and Menger properties}, submitted.



\bibitem{pp}
B.A. Pansera and V. Pavlovi$\acute{c}$, \textit{Open covers and
function spaces}, Matemati$\check{c}$ki Vesnik, 58, (2006),
57--70.


\bibitem{sak1}
 M. Sakai, \textit{$k$-Fr$\acute{e}$chet-Urysohn property of $C_k(X)$}, Topology and its Applications, 154:7, (2007), 1516–-1520.



\end{thebibliography}







\end{document}